\numberwithin{equation}{section}
\numberwithin{figure}{section}
  \theoremstyle{plain}
  \newtheorem*{thm*}{\protect\theoremname}
\theoremstyle{plain}
\newtheorem{thm}{\protect\theoremname}
  \theoremstyle{definition}
  \newtheorem{defn}[thm]{\protect\definitionname}
  \theoremstyle{plain}
  \newtheorem{lem}[thm]{\protect\lemmaname}
  \theoremstyle{plain}
  \newtheorem{prop}[thm]{\protect\propositionname}
  \theoremstyle{plain}
  \newtheorem{cor}[thm]{\protect\corollaryname}
  \theoremstyle{remark}
  \newtheorem{rem}[thm]{\protect\remarkname}
  \theoremstyle{definition}
  \newtheorem{example}[thm]{\protect\examplename}
   \providecommand{\fg}{\ifdim\lastskip>\z@\unskip\fi~\frqq}%
  \providecommand{\corollaryname}{Corollary}
  \providecommand{\definitionname}{Definition}
  \providecommand{\examplename}{Example}
  \providecommand{\lemmaname}{Lemma}
  \providecommand{\propositionname}{Proposition}
  \providecommand{\remarkname}{Remark}
  \providecommand{\theoremname}{Theorem}
\providecommand{\theoremname}{Theorem}
\begin{document}
\subjclass[2010]{14R10, 14R25}
\keywords{Affine fibration, Dolgachev-Weisfeiler Conjecture, variable} 

\author{Adrien Dubouloz}

\address{IMB UMR5584, CNRS, Univ. Bourgogne Franche-Comté, F-21000 Dijon,
France.}

\email{adrien.dubouloz@u-bourgogne.fr}

\title{$\mathbb{A}^{2}$-fibrations between affine spaces are trivial $\mathbb{A}^{2}$-bundles }
\begin{abstract}
We give a criterion for a flat fibration with affine plane fibers
over a smooth scheme defined over a field of characteristic zero to
be a Zariski locally trivial $\mathbb{A}^{2}$-bundle. An application
is a positive answer to a version of the Dolgachev-Weisfeiler Conjecture
for such fibrations: a flat fibration $\mathbb{A}^{m}\rightarrow\mathbb{A}^{n}$
with all fibers isomorphic to $\mathbb{A}^{2}$ is the trivial $\mathbb{A}^{2}$-bundle. 
\end{abstract}

\maketitle

\section*{Introduction }

An $\mathbb{A}^{n}$-fibration over a scheme $X$ is a flat affine
morphism of finite presentation $\pi:V\rightarrow X$ whose fibers,
closed or not, are all isomorphic to the affine $n$-space $\mathbb{A}^{n}$
over the corresponding residue fields. A version of the Dolgachev-Weisfeiler
Conjecture \cite[3.8.3]{DoW75} (see also \cite[Conjecture 3.14]{To16})
asks whether an $\mathbb{A}^{n}$-fibration $\pi:V\rightarrow X$
over a normal locally noetherian integral scheme $X$ is a form of
the affine $n$-space $\mathbb{A}_{X}^{n}=X\times\mathbb{A}^{n}$
over $X$ for the Zariski topology, or at least for the \'etale topology.
The conjecture in such general form remains open, and so far, only
the following two special cases are known (see \cite{BhD94} for a
survey): 

1) For $n=1$, every $\mathbb{A}^{1}$-fibration $\pi:V\rightarrow X$
over a normal locally noetherian integral scheme $X$ is a Zariski
locally trivial $\mathbb{A}^{1}$-bundle by successive results of
Kambayashi-Miyanishi \cite{KM78} and Kambayashi-Wright \cite{KW85}. 

2) For $n=2$, a result of Sathaye \cite{Sa83} asserts that an $\mathbb{A}^{2}$-fibration
$\pi:V\rightarrow X$ over the spectrum $X$ of a rank one discrete
valuation ring containing $\mathbb{Q}$ is the trivial $\mathbb{A}^{2}$-bundle
over $X$. The proof depends on the famous Abhyankar-Moh Theorem,
and the characteristic zero hypothesis is crucial as illustrated by
counter-examples in positive characteristic constructed by Asanuma
\cite[\S 5.1]{As87}. Additional results concerning the structure
of $\mathbb{A}^{2}$-fibrations over spectra of one-dimensional noetherian
domains containing $\mathbb{Q}$ have been obtained later on by Asanuma-Bathwadekar
\cite{AsB97}. 

In contrast, the stable structure of general $\mathbb{A}^{n}$-fibrations
is quite well understood: it was established by Asanuma \cite{As87}
that every such fibration $\pi:V\rightarrow X$ over a smooth affine
scheme $X$ defined over a field of characteristic zero is stably
isomorphic to the total space of vector bundle over $X$, in the sense
that $V\times_{X}\mathbb{A}_{X}^{m}\simeq E\times_{X}\mathbb{A}_{X}^{m}$
for some vector bundle $p:E\rightarrow X$ of rank $n$ over $X$.
The question whether the $\mathbb{A}_{X}^{m}$ factor can be ``canceled''
to obtain that these $\mathbb{A}^{n}$-fibrations are themselves vector
bundles remains open in general. 

On the other hand, by a result of Bass-Connell-Wright \cite{BCW77},
a Zariski locally trivial $\mathbb{A}^{n}$-bundle $\pi:V\rightarrow X$
over an affine scheme always carries the structure of a vector bundle:
there exist local trivializations of $V$ on a Zariski open cover
of $X$ for which the corresponding transition isomorphisms are linear
automorphisms of $\mathbb{A}^{n}$. A well-known property of vector
bundles, and more generally of affine-linear bundles $\nu:V\rightarrow X$
- that is, locally trivial $\mathbb{A}^{n}$-bundles whose transition
isomorphisms are affine automorphisms of $\mathbb{A}^{n}$- is that
their relative cotangent sheaves $\Omega_{V/X}^{1}$ are induced from
$X$, i.e. isomorphic to the pull-back to $V$ of a locally free sheaf
of $\mathcal{O}_{X}$-modules. Summing up, if an $\mathbb{A}^{n}$-fibration
$\pi:V\rightarrow X$ over an affine scheme $X$ is a Zariski locally
trivial $\mathbb{A}^{n}$-bundle, then its relative cotangent sheaf
is induced from $X$. Our main result, which can be summarized as
follows, implies in particular that the converse holds for $\mathbb{A}^{2}$-fibrations
over smooth affine schemes:
\begin{thm*}
Let $\pi:V\rightarrow X$ be an $\mathbb{A}^{2}$-fibration over a
smooth locally noetherian scheme defined over a field of characteristic
zero. If $\Omega_{V/X}^{1}$ is induced from $X$ then $\pi:V\rightarrow X$
is an affine-linear bundle. 
\end{thm*}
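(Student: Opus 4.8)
\emph{Strategy and reductions.} The plan is to reduce the statement to the triviality of $\pi$ over a regular local base and to prove that by induction on the dimension of the base, feeding in Sathaye's theorem --- and through it the Abhyankar--Moh theorem --- in codimension one, while using the hypothesis $\Omega^{1}_{V/X}\simeq\pi^{*}\mathcal{F}$ to gain the rigidity needed to pass from codimension one to higher codimension. Since the conclusion is Zariski-local on $X$, I may assume $X=\mathrm{Spec}\,A$ with $A$ a regular domain essentially of finite type over $k$ and, after shrinking $X$, that $\mathcal{F}$ is free; writing $V=\mathrm{Spec}\,B$, this means $\Omega^{1}_{B/A}$ is free of rank two over $B$. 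Flatness and finite presentation with $\mathbb{A}^{2}$-fibres make $\pi$ smooth of relative dimension two, so $V$ is a regular domain of dimension $\dim A+2$, the fibres of $\pi$ are geometrically integral, and the generic fibre $B\otimes_{A}\mathrm{Frac}(A)$ is a form of $\mathbb{A}^{2}$ over a field of characteristic zero, hence trivial by Kambayashi's theorem on forms of the affine plane. It is then enough to prove that $B\simeq A^{[2]}$ whenever $A$ is moreover local: this gives Zariski-local triviality of $\pi$, and the refinement to an affine-linear bundle follows from the differential hypothesis, which --- given local triviality --- amounts to a reduction of the structure group of $\pi$ from $\mathrm{Aut}(\mathbb{A}^{2})$ to the affine group.

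\emph{How the hypothesis is used.} First, $\mathrm{Pic}(V)=0$: the restriction map $\mathrm{Cl}(V)\to\mathrm{Cl}\bigl(B\otimes_{A}\mathrm{Frac}(A)\bigr)=0$ has kernel generated by the prime divisors of $V$ not dominating $X$, each of which --- being the reduced preimage $\pi^{-1}(V(\mathfrak{p}))$ of a height-one prime $\mathfrak{p}=(a)$ of $A$, irreducible because the fibres of $\pi$ are integral --- equals the principal divisor $\mathrm{div}(a)$; as $V$ is regular this gives $\mathrm{Pic}(V)=\mathrm{Cl}(V)=0$. Since $\Omega^{1}_{V/X}$ and $\Omega^{1}_{\mathbb{A}^{2}_{X}/X}$ are both free, the Jacobian $J$ of any $X$-morphism $\Phi=(f_{1},f_{2})\colon V\to\mathbb{A}^{2}_{X}$ is a regular function on $V$; and if $(f_{1},f_{2})$ restricts to a coordinate system of the generic fibre then, $A$ being normal, $J$ lies in $A\setminus\{0\}$, so $\Phi$ is \'etale precisely when $J\notin\mathfrak{m}$, i.e. precisely when $(f_{1},f_{2})$ also restricts to a coordinate system of the closed fibre $B\otimes_{A}\kappa\simeq\kappa^{[2]}$. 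Assume therefore that $f_{1},f_{2}\in B$ restrict to coordinate systems of both the generic and the closed fibre. Then $\Phi$ is \'etale and birational; being quasi-finite and separated with $\mathbb{A}^{2}_{X}$ normal, $\Phi$ is an open immersion by Zariski's Main Theorem; on each fibre it is then an open immersion $\mathbb{A}^{2}\hookrightarrow\mathbb{A}^{2}$, hence surjective (a proper open subset of $\mathbb{A}^{2}$ isomorphic to $\mathbb{A}^{2}$ cannot exist --- its complement would carry a non-constant unit if it were a divisor, and the open set would not be affine otherwise). Hence $\Phi$ is an isomorphism and $B\simeq A^{[2]}$. Everything is thereby reduced to producing $f_{1},f_{2}\in B$ that simultaneously restrict to coordinate systems of $B\otimes_{A}\mathrm{Frac}(A)$ and of $B\otimes_{A}\kappa$.

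\emph{The induction and the main obstacle.} I would construct such $f_{1},f_{2}$ by induction on $d=\dim A$. For $d=0$, $A$ is a field and this is again Kambayashi's theorem; for $d=1$, $A$ is a discrete valuation ring containing $\mathbb{Q}$ and this is exactly Sathaye's theorem (so the Abhyankar--Moh theorem enters here). For $d\ge 2$, I would pick a general $t\in\mathfrak{m}$ with $A/tA$ regular, essentially smooth over $k$, of dimension $d-1$; the inductive hypothesis applied to $V\times_{X}\mathrm{Spec}(A/tA)\to\mathrm{Spec}(A/tA)$ (whose relative cotangent sheaf is again free) gives $B/tB\simeq(A/tA)^{[2]}$, and a lift of a coordinate system of $B/tB$ to $B$ automatically produces a coordinate system of the closed fibre. \textbf{The hard point, and the main obstacle I expect, is to arrange that this lift also restricts to a coordinate system of the generic fibre} --- equivalently that the associated \'etale morphism $\Phi$ of the previous paragraph is birational. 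The induction unaided gives only that $\pi$ is locally trivial over the punctured spectrum $\mathrm{Spec}\,A\setminus\{\mathfrak{m}\}$, which is strictly weaker than what is needed: there exist locally trivial but non-trivial $\mathbb{A}^{2}$-bundles over punctured spectra of two-dimensional regular local rings. Overcoming this --- by correcting the lift with the help of the triviality of $\pi$ already known at the height-one primes of $A$, in the manner of Sathaye's proof, while using $\Omega^{1}_{V/X}\simeq\pi^{*}\mathcal{F}$ to rigidify the gluing --- is the heart of the argument; it is also exactly where the method ceases to apply to general $\mathbb{A}^{n}$-fibrations, for which the analogue of Sathaye's theorem is false.
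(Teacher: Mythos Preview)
Your proposal correctly reduces to the local case and correctly identifies the crucial difficulty, but it does not resolve it: you explicitly label as ``the hard point'' the passage from local triviality over the punctured spectrum $B_*=\mathrm{Spec}(A)\setminus\{\mathfrak{m}\}$ to triviality over the full spectrum, and then offer only a vague indication (``correcting the lift \ldots\ in the manner of Sathaye's proof'') of how this might go. This is precisely the heart of the theorem, and leaving it unproved is leaving the theorem unproved. Your coordinate-lifting framework gives no mechanism for the correction you allude to; nothing in the argument so far distinguishes your $V$ from the total space of one of the nontrivial locally trivial $\mathbb{A}^2$-bundles over $B_*$ whose existence you yourself note.

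The paper fills this gap by a different mechanism than coordinate manipulation. The point is that the hypothesis $\Omega^1_{V/X}\simeq\pi^*\mathcal{E}^\vee$ does more than control Jacobians: once $\pi$ is known to be Zariski locally trivial over $B_*$, one can exponentiate the canonical derivation $\pi_*\mathcal{O}_V\to\pi_*\mathcal{O}_V\otimes\mathcal{E}^\vee$ to equip $V_*\to B_*$ with the structure of a torsor under the vector bundle $E=\mathrm{Spec}(\mathrm{Sym}^\cdot\mathcal{E}^\vee)$; over a regular local base $\mathcal{E}$ is free, so this is a $\mathbb{G}_a^n$-torsor, classified by a class in $H^1(B_*,\mathcal{O}_B)^{\oplus n}$. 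For $\dim A\geq 3$ this group vanishes by local cohomology. For $\dim A=2$ one argues by contradiction: a nonzero component of the class gives a nontrivial $\mathbb{G}_a$-torsor $W_1\to B_*$ with $W_1$ affine, forcing $V_*$ itself to be affine; but $V\setminus V_*=\pi^{-1}(\mathfrak{m})$ has codimension $2$ in the affine scheme $V$, which is impossible. Hence the torsor is trivial, and the isomorphism $V_*\simeq B_*\times\mathbb{A}^n$ extends by normality to $V\simeq B\times\mathbb{A}^n$. Your outline never accesses this torsor structure, and without it I do not see how the induction closes.
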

Note that combined with the fact that vector bundles on $\mathbb{A}_{k}^{m}$
are trivial by the Quillen-Suslin Theorem \cite{Qu76,Su76}, this
characterization implies that an $\mathbb{A}^{2}$-fibration $\pi:\mathbb{A}_{k}^{m}\rightarrow\mathbb{A}_{k}^{n}$
is isomorphic to the trivial $\mathbb{A}^{2}$-bundle $\mathbb{A}_{k}^{n}\times\mathbb{A}_{k}^{2}$. 

As an application in the special case $m=4$, we deduce that the famous
Vénéreau polynomials \cite{Ve01}, as well as large collections of
``Vénéreau-type'' polynomials introduced by Daigle-Freudenburg \cite{DaFr10}
and Lewis \cite{Le13}, are variables of polynomials rings in four
variables over a field (see $\S$ \ref{subsec:Venereau}). As another
application, we answer in $\S$ \ref{subsec:Freudenburg} a question
raised by Freudenburg \cite{Fr09} concerning the structure of locally
nilpotent derivations with a slice on polynomial rings in three variables
over a base ring. 

\section{Recollection on $\mathbb{A}^{n}$-fibrations, $\mathbb{A}^{n}$-bundles
and affine-linear bundles}

In what follows we fix a base field $k$ of characteristic zero. All
schemes considered are defined over $k$. 
\begin{defn}
Let $X$ be a scheme. A flat affine morphism of finite presentation
$f:V\rightarrow X$ is called: 

1) An \emph{$\mathbb{A}^{n}$-fibration} if for every point $x\in X$,
the scheme theoretic fiber $f^{-1}(x)$ is isomorphic to the affine
$n$-space $\mathbb{A}_{\kappa(x)}^{n}$ over the residue field $\kappa(x)$. 

2) A \emph{Zariski $($resp. \'etale$)$ locally trivial $\mathbb{A}^{n}$-bundle}
if there exists a Zariski open (resp. \'etale) cover $Y\rightarrow X$
of $X$ such that $V\times_{X}Y$ is isomorphic as a scheme over $Y$
to the trivial $\mathbb{A}^{n}$-bundle $\mathbb{A}_{Y}^{n}=Y\times\mathbb{A}_{k}^{n}$. 
\end{defn}
Elementary examples of Zariski locally trivial $\mathbb{A}^{n}$-bundles
are vector bundles. To fix a convention, by a\emph{ }vector bundle
of rank $n\geq1$ over a scheme $X$, we mean the relative spectrum
$p:E={\rm Spec}({\rm Sym}^{\cdot}\mathcal{E}^{\vee})\rightarrow X$
of the symmetric algebra of the dual of a locally free $\mathcal{O}_{X}$-module
$\mathcal{E}$ of rank $n$. Recall that every vector bundle $p:E\rightarrow X$
carries the structure of a Zariski locally constant group scheme for
the law given by the addition of germs of sections. An \emph{$E$-torsor
}is an \'etale locally trivial principal homogeneous $E$-bundle,
that is, a scheme $\nu:V\rightarrow X$ equipped with an action $\mu:E\times_{X}V\rightarrow V$
of $E$ for which there exists an \'etale cover $Y\rightarrow X$
such that $V\times_{X}Y$ is equivariantly isomorphic to $E\times_{X}Y$
acting on itself by translations. 
\begin{defn}
An\emph{ affine-linear bundle} or rank $n\geq1$ over a scheme $X$
is \'etale locally trivial $\mathbb{A}^{n}$-bundle $\nu:V\rightarrow X$
which can be further equipped with the structure of an $E$-torsor
for a suitable vector bundle $p:E\rightarrow X$ of rank $n$ over
$X$.
\end{defn}
Equivalently, an affine-linear bundle is an \'etale locally trivial
$\mathbb{A}^{n}$-bundle $\nu:V\rightarrow X$ for which there exists
an \'etale cover $f:Y\rightarrow X$ and an isomorphism $V\times_{X}Y\stackrel{\varphi}{\rightarrow}\mathbb{A}_{Y}^{n}$
such that over $Y\times_{X}Y$ equipped with the two projections $\mathrm{pr}_{1},\mathrm{pr}_{2}:Y\times_{X}Y\rightarrow Y$,
$\text{\ensuremath{\mathrm{pr}}}_{2}^{*}\varphi\circ\text{\ensuremath{\mathrm{pr}}}_{1}^{*}\varphi^{-1}$
is an affine automorphism of $\mathbb{A}_{Y\times Y}^{n}$ , i.e.
is given by an element $(A,T)$ of $\mathrm{Aff}_{n}(Y\times_{X}Y)={\rm GL}_{n}(Y\times_{X}Y)\rtimes\mathbb{G}_{a}^{r}(Y\times_{X}Y)$.
The vector bundle $E$ for which $\nu:V\rightarrow X$ is an $E$-torsor
is uniquely determined up to isomorphism by the fact that its class
in $H_{\textrm{ét}}^{1}(X,{\rm GL}_{n})$ coincides with that of the
$1$-cocyle $A\in{\rm GL}_{n}(Y\times_{X}Y)$ for the \'etale cover
$f:Y\rightarrow X$ of $X$. Since the affine group $\mathrm{Aff}_{n}=\mathrm{GL}_{n}\ltimes\mathbb{G}_{a}^{n}$
is special \cite{Gr58}, every affine-linear bundle is actually locally
trivial in the Zariski topology. Furthermore, there is a one-to-one
correspondence between isomorphy classes of $E$-torsors over $X$
and elements of the cohomology group $\check{H}^{1}(X,E)\simeq H_{\mathrm{Zar}}^{1}(X,E)\simeq H_{\mathrm{\textrm{ét}}}^{1}(X,E)$,
with $0\in\check{H}^{1}(X,E)$ corresponding the the trivial $E$-torsor
$p:E\rightarrow X$ (see e.g. \cite[XI.4]{SGA1}). In particular,
every $E$-torsor over an affine scheme $X$ is isomorphic to the
trivial one . \\

Recall \cite[16.4.9]{EGAIV} that given a vector bundle $p:E=\mathrm{Spec}(\mathrm{Sym}^{\cdot}\mathcal{E}^{\vee})\rightarrow X$,
there exists a canonical isomorphism of $\mathcal{O}_{E}$-modules
$p^{*}\mathcal{E}^{\vee}\stackrel{\simeq}{\rightarrow}\Omega_{E/X}^{1}$
defined as the composition of the canonical homomorphism $p^{*}\mathcal{E}^{\vee}\rightarrow\mathcal{O}_{E}$
with the canonical derivation $d_{E/X}:\mathcal{O}_{E}\rightarrow\Omega_{E/X}^{1}$.
A direct local calculation shows more generally that the relative
cotangent sheaf $\Omega_{V/X}^{1}$ of any $E$-torsor $\nu:V\rightarrow X$
is isomorphic to $\nu^{*}\mathcal{E}^{\vee}$. This property actually
characterizes affine-linear bundles among \'etale locally trivial
$\mathbb{A}^{n}$-bundles: 
\begin{lem}
\label{lem:AffLin-charac} A \'etale locally trivial $\mathbb{A}^{n}$-bundle
$\pi:V\rightarrow X$ over a scheme $X$ is an affine-linear bundle
if and only if there exists a locally free sheaf $\mathcal{E}$ of
rank $n$ on $X$ such that $\Omega_{V/X}^{1}\simeq\pi^{*}\mathcal{E}^{\vee}$.
If such a locally free sheaf $\mathcal{E}$ exists, then $\pi:V\rightarrow X$
is torsor under the rank $n$ vector bundle $p:E=\mathrm{Spec}(\mathrm{Sym}^{\cdot}\mathcal{E}^{\vee})\rightarrow X$
on $X$. 
\end{lem}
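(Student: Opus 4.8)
The plan is to prove the nontrivial implication, the converse being the local computation recalled just before the statement (the relative cotangent sheaf of any $E$-torsor is $\pi^{*}\mathcal{E}^{\vee}$), and to obtain the identification of the structure group as a by-product. I will work \'etale-locally on $X$, over which $\pi$ becomes the trivial bundle $\mathbb{A}_{R}^{n}=\mathrm{Spec}(R[t_{1},\dots,t_{n}])$; the problem then becomes that of modifying the gluing data of $V$ along a trivializing \'etale cover $Y\rightarrow X$, i.e. of reducing its structure group from $\mathrm{Aut}(\mathbb{A}^{n})$ to $\mathrm{Aff}_{n}$, which by the facts recalled above is equivalent to exhibiting $\pi$ as a torsor under a rank $n$ vector bundle.

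The main tool is a relative Poincar\'e lemma: the complex of quasi-coherent sheaves $0\rightarrow\mathcal{O}_{X}\rightarrow\pi_{*}\mathcal{O}_{V}\stackrel{d}{\rightarrow}\pi_{*}\Omega_{V/X}^{1}\stackrel{d}{\rightarrow}\pi_{*}\Omega_{V/X}^{2}\rightarrow\cdots$ is exact. Since $\pi$ is affine, the $\pi_{*}\Omega_{V/X}^{p}$ are flat $\mathcal{O}_{X}$-modules (on a trivializing cover, $\Omega_{V/X}^{p}$ is free over $\mathcal{O}_{V}$ and $\pi_{*}\mathcal{O}_{V}$ is free over $\mathcal{O}_{X}$), and tensoring with a residue field $\kappa(x)$ yields the algebraic de Rham complex of $\mathbb{A}_{\kappa(x)}^{n}$, which is a resolution of $\kappa(x)$ because the characteristic is zero; since a bounded-below complex of flat modules that becomes acyclic after tensoring with every residue field is itself acyclic, the claim follows. (This is where characteristic zero is used, consistently with Asanuma's positive-characteristic counterexamples.) In particular $\ker(d\colon\pi_{*}\Omega_{V/X}^{1}\rightarrow\pi_{*}\Omega_{V/X}^{2})=d(\pi_{*}\mathcal{O}_{V})$: closed relative $1$-forms are exact.

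Using $\Omega_{V/X}^{1}\simeq\pi^{*}\mathcal{E}^{\vee}$ and the projection formula, $\pi_{*}\Omega_{V/X}^{p}\simeq(\bigwedge^{p}\mathcal{E}^{\vee})\otimes_{\mathcal{O}_{X}}\pi_{*}\mathcal{O}_{V}$. The heart of the argument is to produce, Zariski-locally on $X$, an $\mathcal{O}_{X}$-submodule $\mathcal{N}\subset\pi_{*}\mathcal{T}_{V/X}$ that is locally free of rank $n$, whose sections form an $\mathcal{O}_{V}$-basis of $\mathcal{T}_{V/X}=(\Omega_{V/X}^{1})^{\vee}$, and which consists of pairwise commuting relative vector fields; dually, one wants a frame $\beta_{1},\dots,\beta_{n}$ of $\Omega_{V/X}^{1}$ made of closed relative $1$-forms and compatible with the descent datum along $Y\rightarrow X$, the equivalence of the two formulations following from the Cartan identity $d\beta(D,D')=D\beta(D')-D'\beta(D)-\beta([D,D'])$ applied to dual frames. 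The relative Poincar\'e lemma, together with the description of $\pi_{*}\Omega_{V/X}^{\bullet}$ above, is what allows one to correct the a priori non-closed frame coming from the isomorphism $\Omega_{V/X}^{1}\simeq\pi^{*}\mathcal{E}^{\vee}$ into a closed one while preserving its global descent data. Granting such $\mathcal{N}$, each of its sections turns out to be a locally nilpotent relative derivation, so in characteristic zero they integrate to an action on $V$ of the rank $n$ vector bundle $E_{0}=\mathrm{Spec}(\mathrm{Sym}^{\cdot}\mathcal{N}^{\vee})$; on each fibre this is a $\mathbb{G}_{a}^{n}$-action whose orbit through a point is open (its orbit map is \'etale at the origin) and closed (orbits of unipotent groups on affine varieties are closed), hence all of the fibre, with trivial stabilizer ($\mathbb{G}_{a}^{n}$ has no nontrivial finite subgroup in characteristic zero). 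Thus $\pi\colon V\rightarrow X$ is an $E_{0}$-torsor, in particular an affine-linear bundle.

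Finally, since $\pi$ is now an $E_{0}$-torsor, the computation recalled before the statement gives $\Omega_{V/X}^{1}\simeq\pi^{*}\mathcal{N}^{\vee}$, so a short comparison with $\Omega_{V/X}^{1}\simeq\pi^{*}\mathcal{E}^{\vee}$, using the recalled uniqueness of the vector bundle attached to an affine-linear bundle (via its class in $H_{\textrm{\'et}}^{1}(X,\mathrm{GL}_{n})$), identifies $E_{0}$ with $E=\mathrm{Spec}(\mathrm{Sym}^{\cdot}\mathcal{E}^{\vee})$, so that $\pi$ is an $E$-torsor, as asserted. I expect the decisive difficulty to be the construction of $\mathcal{N}$, equivalently the reduction of the gluing cocycle to $\mathrm{Aff}_{n}$: \'etale-locally the bundle is trivial and nothing happens, and the whole content lies in choosing the closed frame globally in a way compatible with the descent datum; it is precisely here that the relative Poincar\'e lemma, hence the characteristic-zero hypothesis, is indispensable.
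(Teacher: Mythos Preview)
Your proposal identifies the right target but leaves the central step---constructing the rank-$n$ locally free $\mathcal{O}_X$-submodule $\mathcal{N}\subset\pi_*\mathcal{T}_{V/X}$ of pairwise commuting vector fields (equivalently, a global closed frame of $\Omega_{V/X}^1$)---unproven, and you say so explicitly. The relative Poincar\'e lemma tells you that closed relative $1$-forms are exact, but not how to modify the given frame coming from $\pi^*\mathcal{E}^\vee\simeq\Omega_{V/X}^1$ into a closed one: correcting each basis element by an exact form does not obviously preserve the frame condition, and doing this compatibly with the \'etale descent datum is precisely the global problem you are trying to solve. So as written this is a program, not a proof.

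The paper's argument bypasses this difficulty by not seeking a closed frame or commuting vector fields at all. It uses the given isomorphism $\Omega_{V/X}^1\simeq\pi^*\mathcal{E}^\vee$ to transport the universal derivation $d_{V/X}$ into a single global $\mathcal{O}_X$-derivation $\partial_1\colon\pi_*\mathcal{O}_V\to\pi_*\mathcal{O}_V\otimes\mathcal{E}^\vee$, iterates it to $\partial_r\colon\pi_*\mathcal{O}_V\to\pi_*\mathcal{O}_V\otimes\mathrm{Sym}^r\mathcal{E}^\vee$, and writes down the candidate co-action $\exp(\partial_1)=\sum_{r\ge 0}\partial_r/r!$ directly. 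Globality is automatic because $\partial_1$ is built from the global isomorphism; what remains are the purely local verifications that $\exp(\partial_1)$ is well-defined (the filtration by $\ker\partial_r$ is exhaustive) and satisfies the co-action axioms, and the paper checks these on a trivializing \'etale cover by identifying $f^*\partial_r$ with the $r$-th Taylor differential, for which the axioms reduce to the familiar translation formulas on $\mathbb{A}^n$. In effect, the paper trades your search for $n$ commuting vector fields for one $\mathcal{E}^\vee$-valued derivation that is global by construction, pushing all the delicate points to an \'etale-local computation where they become tautological. Your Poincar\'e-lemma apparatus is not needed for this route.
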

\begin{proof}
Since $\pi$ is an affine morphism of finite type, $\mathcal{A}=\pi_{*}\mathcal{O}_{V}$
is a quasi-coherent $\mathcal{O}_{X}$-algebra of finite type. Let
$d_{V/X}:\mathcal{O}_{V}\rightarrow\Omega_{V/X}^{1}$ be the canonical
$\mathcal{O}_{X}$-derivation. Since $\Omega_{V/X}^{1}\simeq\pi^{*}\mathcal{E}^{\vee}$,
$\pi_{*}\Omega_{V/X}^{1}$ is isomorphic to $\pi_{*}\mathcal{O}_{V}\otimes\mathcal{E}^{\vee}=\mathcal{A}\otimes\mathcal{E}^{\vee}$
by the projection formula, and the direct image $\pi_{*}d_{V/X}$
of $d_{V/X}$ is an $\mathcal{O}_{X}$-derivation $\partial_{1}:\mathcal{A}\rightarrow\mathcal{A}\otimes\mathcal{E}^{\vee}$
of $\mathcal{A}$ with values in $\mathcal{A}\otimes\mathcal{E}^{\vee}$.
For every $r\geq2$, we let $\partial_{r}:\mathcal{A}\rightarrow\mathcal{A}\otimes\mathrm{Sym}^{r}\mathcal{E}^{\vee}$
be the $\mathcal{O}_{X}$-linear homomorphism defined as the composition
of 
\[
(\partial_{1}\otimes\mathrm{id}_{\mathrm{Sym}^{r-1}\mathcal{E}^{\vee}})\circ{\displaystyle \partial_{r-1}:}\mathcal{A}\rightarrow\mathcal{A}\otimes\mathrm{Sym}^{r-1}\mathcal{E}^{\vee}\rightarrow(\mathcal{A}\otimes\mathcal{E}^{\vee})\otimes\mathrm{Sym}^{r-1}\mathcal{E}^{\vee}
\]
with the canonical homomorphism $\mathcal{A}\otimes(\mathcal{E}^{\vee}\otimes\mathrm{Sym}^{r-1}\mathcal{E}^{\vee})\rightarrow\mathcal{A}\otimes\mathrm{Sym}^{r}\mathcal{E}^{\vee}$.
We let $\partial_{0}=\mathrm{id}_{\mathcal{A}}:\mathcal{A}\rightarrow\mathcal{A}\otimes\mathrm{Sym}^{0}\mathcal{E}^{\vee}\simeq\mathcal{A}$.
The kernels $\mathcal{K}\mathrm{er}\partial_{r}$, $r\geq0$, form
an increasing sequence of quasi-coherent sub-$\mathcal{O}_{X}$-modules
of $\mathcal{A}$. We claim that $\mathcal{A}=\mathrm{colim}_{r\geq0}\mathcal{K}er\partial_{r}$
and that the map 
\[
\exp(\partial_{1}):=\sum_{r\geq0}\frac{\partial_{r}}{r!}:\mathcal{A}\rightarrow\bigoplus_{r\geq0}\mathcal{A}\otimes\mathrm{Sym}^{r}\mathcal{E}^{\vee}\simeq\mathcal{A}\otimes\mathrm{Sym}^{\cdot}\mathcal{E}^{\vee}
\]
is a well-defined homomorphism of $\mathcal{O}_{X}$-algebra, corresponding
to an action $\mu:E\times_{X}V\rightarrow V$ of the vector bundle
$p:E=\mathrm{Spec}(\mathrm{Sym}^{\cdot}\mathcal{E}^{\vee})\rightarrow X$
on $\pi:V\rightarrow X$. This can be seen as follows: let $f:Y\rightarrow X$
be an \'etale cover of $X$ on which $\pi:V\rightarrow X$ becomes
trivial, say $W=V\times_{X}Y\simeq\mathbb{A}_{Y}^{n}=\mathrm{Spec}(\mathcal{O}_{Y}[t_{1},\ldots,t_{n}])$.
Then $\Omega_{W/Y}^{1}$ is the free $\mathcal{O}_{W}$-module with
basis $dt_{1},\ldots,dt_{n}$ and since $\Omega_{W/Y}^{1}=\mathrm{pr}_{V}^{*}\Omega_{V/X}^{1}\simeq\mathrm{pr}_{Y}^{*}f^{*}\mathcal{E}^{\vee}$,
we conclude by restricting to the zero section of $\mathbb{A}_{Y}^{n}$
that $f^{*}\mathcal{E}^{\vee}\simeq\mathcal{O}_{Y}^{\oplus n}$. Via
these isomorphisms, the homomorphism 
\[
f^{*}\partial_{r}:f^{*}\mathcal{A}=(\mathrm{pr}_{Y})_{*}\mathcal{O}_{W}\rightarrow f^{*}(\mathcal{A}\otimes\mathrm{Sym}^{r}\mathcal{E}^{\vee})\simeq(\mathrm{pr}_{Y})_{*}\mathcal{O}_{W}\otimes\mathrm{Sym}^{r}f^{*}\mathcal{E}^{\vee}
\]
coincides with the homomorphism of $\mathcal{O}_{Y}$-modules 
\[
\begin{array}{ccc}
d_{r}:\mathcal{O}_{Y}[t_{1},\ldots,t_{n}] & \longrightarrow & \mathcal{O}_{Y}[t_{1},\ldots,t_{n}]\otimes\mathrm{Sym}^{r}(\mathcal{O}_{Y}\langle dt_{1},\ldots,dt_{n}\rangle)\\
p(t_{1},\ldots t_{n}) & \mapsto & {\displaystyle \sum_{I=(i_{1},\ldots,i_{n}),i_{1}+\cdots i_{r}=r}}\sum\frac{\partial^{r}p}{\partial t_{1}^{i_{1}}\cdots\partial t_{n}^{i_{n}}}\otimes dt_{1}^{i_{1}}\cdots dt_{n}^{i_{n}}.
\end{array}
\]
Since colimits commute with pull-backs, we have $f^{*}\mathrm{colim}_{r\geq0}\mathcal{K}er\partial_{r}=\mathrm{colim}_{r\geq0}\mathcal{K}erd_{r}=\mathcal{O}_{Y}[t_{1},\ldots,t_{n}]$.
This implies that the injective homomorphism $\mathrm{colim}_{r\geq0}\mathcal{K}er\partial_{r}\rightarrow\mathcal{A}$
is also surjective, hence that $\exp(\partial_{1})$ is indeed well-defined.
Now we have to check that it satisfies the usual axioms for being
the co-morphism of an action of $E$ on $V$, namely, the commutativity
of the diagrams

\selectlanguage{french}%
\[\xymatrix{\mathcal{A} \ar[r]^{\exp(\partial_1)} \ar[d]_{\exp(\partial_1)} &  \mathcal{A}\otimes \mathrm{Sym}^{\cdot} \mathcal{E}^{\vee} \ar[d]^-{\exp(\partial_1)\otimes \mathrm{id}} & & \mathcal{A} \ar[r]^-{\exp(\partial_1)}  \ar[d]_{\mathrm{id}} & \mathcal{A}\otimes \mathrm{Sym}^{\cdot} \mathcal{E}^{\vee} \ar[d]^{\mathrm{id}\otimes\varepsilon}\\ \mathcal{A}\otimes \mathrm{Sym}^{\cdot} \mathcal{E}^{\vee} \ar[r]^-{\mathrm{id}\otimes\mathrm{m}} & \mathcal{A}\otimes \mathrm{Sym}^{\cdot} \mathcal{E}^{\vee} \otimes \mathrm{Sym}^{\cdot} \mathcal{E}^{\vee}  & & \mathcal{A} \ar[r]^-{\simeq} & \mathcal{A}\otimes\mathcal{O}_X }\]\foreignlanguage{english}{where
$\mathrm{m}:\mathrm{Sym}^{\cdot}\mathcal{E}^{\vee}\rightarrow\mathrm{Sym}^{\cdot}\mathcal{E}^{\vee}\otimes\mathrm{Sym}^{\cdot}\mathcal{E}^{\vee}\simeq\mathrm{Sym}^{\cdot}(\mathcal{E}^{\vee}\oplus\mathcal{E}^{\vee})$
is the homomorphism of $\mathcal{O}_{X}$-algebra induced by the diagonal
homomorphism $\mathcal{E}^{\vee}\rightarrow\mathcal{E}^{\vee}\oplus\mathcal{E}^{\vee}$
and where $\varepsilon:\mathrm{Sym}^{\cdot}\mathcal{E}^{\vee}\rightarrow\mathcal{O}_{X}$
is the canonical homomorphism with kernel $\mathcal{E}^{\vee}\cdot\mathrm{Sym}^{\cdot}\mathcal{E}^{\vee}$.
The commutativity of these diagrams can be checked locally on an \'etale
cover of $X$. But by construction, $f^{*}\exp(\partial_{1}):f^{*}\mathcal{A}\rightarrow f^{*}\mathcal{A}\otimes\mathrm{Sym}^{\cdot}f^{*}\mathcal{E}^{\vee}$
coincides with $\exp(d_{1})$, which is precisely the co-morphism
of the action of $E\times_{X}Y\simeq\mathbb{G}_{a,Y}^{n}$ on $V\times_{X}Y\simeq\mathbb{A}_{Y}^{n}$
by translations. The above diagrams are thus commutative, and we conclude
that $\exp(\partial_{1})$ is the co-morphism of an action $\mu:E\times_{X}V\rightarrow V$
of the vector bundle $p:E=\mathrm{Spec}(\mathrm{Sym}^{\cdot}\mathcal{E}^{\vee})\rightarrow X$
on $\pi:V\rightarrow X$ for which $V\times_{X}Y$ is equivariantly
isomorphic to $E\times_{X}Y$ acting on itself by translations. This
shows that $\pi:V\rightarrow X$ is an $E$-torsor, as desired. }
\end{proof}

\section{A characterization of affine-linear bundles among $\mathbb{A}^{n}$-fibrations }

In this section, we establish a more flexible characterization of
affine-linear bundles among all $\mathbb{A}^{n}$-fibrations instead
of only locally trivial ones. We begin with the following local version.
\begin{prop}
\label{prop:MainProp} Let $B$ be the spectrum of a noetherian normal
local ring of dimension $d\geq2$ and let $\pi:V\rightarrow B$ be
an $\mathbb{A}^{n}$-fibration. Suppose that

a) $B$ is regular or $\pi$ has a section, 

b) $\pi$ is a Zariski locally trivial $\mathbb{A}^{n}$-bundle outside
the closed point of $B$,

c) $\Omega_{V/B}^{1}$ is trivial.

\noindent Then $\pi:V\rightarrow B$ is a trivial $\mathbb{A}^{n}$-bundle. 
\end{prop}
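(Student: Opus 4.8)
The plan is to apply Lemma~\ref{lem:AffLin-charac} over the punctured base $U=B\setminus\{x_{0}\}$, where $x_{0}$ denotes the closed point of $B$, and then to transport the resulting affine‑linear bundle structure across $x_{0}$, using that the closed fibre of $\pi$ has codimension $d\ge 2$ in $V$. Write $B=\operatorname{Spec}A$. Since $A$ is normal it is $S_{2}$, and $\pi$ is flat of finite presentation with integral regular fibres of dimension $n$, so $V=\operatorname{Spec}R$ is integral, $\dim R=d+n$, and $R$ is $S_{2}$ (flat over an $S_{2}$ ring with $S_{2}$ fibres). The closed fibre $Z=\pi^{-1}(x_{0})\cong\mathbb{A}^{n}_{\kappa(x_{0})}$ has dimension $n$, hence $\operatorname{codim}(Z,V)=d\ge 2$; the same holds for $\mathbb{A}^{n}_{B}$, for $V\times_{B}V$, $V\times_{B}\mathbb{A}^{n}_{B}$, and for iterated such products, which are again $S_{2}$ with regular fibres over $A$. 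It follows that for each of these schemes $Y$ the restriction $\Gamma(Y,\mathcal{O}_{Y})\xrightarrow{\ \sim\ }\Gamma(Y\setminus Y_{x_{0}},\mathcal{O})$ is an isomorphism; in particular $R=\Gamma(\pi^{-1}(U),\mathcal{O})$ and $A[t_{1},\dots,t_{n}]=\Gamma(\mathbb{A}^{n}_{U},\mathcal{O})$. Hence a $B$-morphism between two such schemes is determined by its restriction over $U$, and an identity of $B$-morphisms — such as a group‑law axiom — may be verified over $U$.

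\emph{The structure over $U$.} By (b), $\pi$ restricts to a Zariski‑locally trivial $\mathbb{A}^{n}$-bundle $\pi_{U}\colon\pi^{-1}(U)\to U$, and by (c) its relative cotangent sheaf $\Omega^{1}_{\pi^{-1}(U)/U}=\Omega^{1}_{V/B}|_{\pi^{-1}(U)}$ is free, hence induced from $U$. Lemma~\ref{lem:AffLin-charac} thus shows that $\pi_{U}$ is an affine‑linear bundle, a torsor under a rank $n$ vector bundle $E_{U}=\operatorname{Spec}_{U}(\operatorname{Sym}^{\cdot}\mathcal{E}_{U}^{\vee})$ with $\pi_{U}^{*}\mathcal{E}_{U}^{\vee}\cong\Omega^{1}_{\pi^{-1}(U)/U}$; moreover its proof realizes $(\pi_{U})_{*}\mathcal{O}_{\pi^{-1}(U)}$ as the increasing union $\operatorname{colim}_{r}\mathcal{K}er\,\partial_{r}$ of the kernels of the operators built from the canonical relative derivation.

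\emph{Trivializing $\mathcal{E}_{U}$ and concluding.} Hypothesis (a) enters to guarantee that $\mathcal{E}_{U}$ is free: if $\pi$ has a section $s$ then $\mathcal{E}_{U}^{\vee}\cong (s|_{U})^{*}\pi_{U}^{*}\mathcal{E}_{U}^{\vee}=(s^{*}\Omega^{1}_{V/B})|_{U}$, which is free because $\Omega^{1}_{V/B}$ is; and if $A$ is regular one argues similarly, using that $V$ is then regular and that $\pi_{U}^{*}\mathcal{E}_{U}^{\vee}$ is trivial (for instance, $\mathcal{E}_{U}$ extends to a reflexive sheaf on $B$ whose pullback to $V$ has depth $\ge 2$ along $Z$ and is thus trivial, whence the sheaf itself is trivial by faithfully flat descent along $\pi$). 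Granting this, $E_{U}\cong\mathbb{A}^{n}_{U}$, so $\pi_{U}$ is a $\mathbb{G}_{a}^{n}$-torsor. When $\pi$ has a section this torsor has one, hence is trivial, so $\pi^{-1}(U)\cong\mathbb{A}^{n}_{U}$ as $U$-schemes; taking global sections gives $R\cong A[t_{1},\dots,t_{n}]$, i.e. $V\cong\mathbb{A}^{n}_{B}$. In general, the translation action $\mathbb{G}_{a}^{n}\times_{U}\pi^{-1}(U)\to\pi^{-1}(U)$ corresponds, through the isomorphisms of the first step, to a $B$-morphism $\mu\colon\mathbb{G}_{a}^{n}\times_{B}V\to V$; the action axioms and the torsor condition $(\mu,\mathrm{pr}_{2})\colon\mathbb{G}_{a}^{n}\times_{B}V\xrightarrow{\ \sim\ }V\times_{B}V$ hold over $U$ and hence over $B$, so $\pi$ is a $\mathbb{G}_{a}^{n}$-torsor over the affine scheme $B$, therefore trivial: $V\cong\mathbb{A}^{n}_{B}$. (Equivalently, one checks directly that $\operatorname{colim}_{r}\mathcal{K}er\,\partial_{r}=\pi_{*}\mathcal{O}_{V}$, since this holds over $U$, and then applies the construction of Lemma~\ref{lem:AffLin-charac}.)

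\emph{Main obstacle.} The delicate point is precisely the passage from $U$ to $B$: one must know not only that $\pi_{U}$ is affine‑linear but that the vector bundle $\mathcal{E}_{U}$ it is a torsor under is trivial and compatible with the given trivialization of $\Omega^{1}_{V/B}$, so that the $\mathbb{G}_{a}^{n}$-action — and the verification of the group‑law and torsor identities — can be extended across the closed fibre by the depth‑$2$ argument. This is exactly where hypothesis (a) is invoked: a section trivializes $\mathcal{E}_{U}$ at once, and regularity of $A$ forces it. Arranging the $S_{2}$/codimension bookkeeping so that it applies uniformly to $V$ and to all the fibre products entering the torsor axioms is routine but must be carried out with some care.
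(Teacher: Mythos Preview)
Your argument is correct and takes a genuinely different route from the paper. The paper first proves that the restriction $\pi\colon V_*\to B_*$ is a \emph{trivial} $\mathbb{G}_a^n$-torsor and then extends the resulting isomorphism $V_*\simeq\mathbb{A}^n_{B_*}$ across the closed point by normality. Triviality over $B_*$ is exactly where hypothesis~(a) enters: a section settles it at once, while for $B$ regular the paper splits into $d\ge 3$ (where $H^1(B_*,\mathcal{O}_B)\simeq H^2_{b_0}(B,\mathcal{O}_B)=0$ by local cohomology vanishing) and $d=2$ (where a nontrivial $\mathbb{G}_a$-factor $W_1\to B_*$ is shown, via \cite{DuFin14}, to be affine, forcing $V_*$ affine and contradicting the codimension of $\pi^{-1}(b_0)$). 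Your approach instead extends the $\mathbb{G}_a^n$-torsor \emph{structure} itself across the closed point by the $S_2$/Hartogs bookkeeping and then uses affineness of $B$ to trivialize. This avoids both the case distinction and the external reference, and in fact renders hypothesis~(a) superfluous.

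One remark: the step you flag as the ``main obstacle'', namely proving $\mathcal{E}_U$ free, is unnecessary. Lemma~\ref{lem:AffLin-charac} asserts that $\pi_U$ is a torsor under the vector bundle attached to \emph{any} $\mathcal{E}$ with $\pi_U^*\mathcal{E}^\vee\simeq\Omega^1_{\pi^{-1}(U)/U}$; since $\Omega^1_{V/B}$ is free by~(c), you may simply take $\mathcal{E}^\vee=\mathcal{O}_U^{\oplus n}$ and obtain the $\mathbb{G}_{a,U}^n$-torsor structure directly. Your sketch of this step in the regular case (reflexive extension, faithfully flat descent) is underspecified, but as the step is redundant this does not affect the validity of the overall argument.
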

\begin{proof}
Let $b_{0}$ be the closed point of $B$ and let $B_{*}=B\setminus\{b_{0}\}$.
Since $\Omega_{V/B}^{1}\simeq\pi^{*}\mathcal{O}_{B}^{\oplus n}$,
it follows from Lemma \ref{lem:AffLin-charac} that the restriction
$\pi:V_{*}\rightarrow B_{*}$ of $\pi:V\rightarrow B$ over $B_{*}$
is a $\mathbb{G}_{a}^{n}$-torsor. It suffices to show that $\pi:V_{*}\rightarrow B_{*}$
is a trivial $\mathbb{G}_{a}^{n}$-torsor. Indeed, if so, then since
$V$ and $B\times\mathbb{A}^{n}$ are both affine and normal, the
isomorphism $V_{*}\simeq B_{*}\times\mathbb{A}^{n}$ extends to an
isomorphism $V\simeq B\times\mathbb{A}^{n}$ of schemes over $B$.
If $\pi$ has a section, then $\pi:V_{*}\rightarrow B_{*}$ has a
section hence is the trivial $\mathbb{G}_{a}^{n}$-torsor. Now suppose
that $B$ is regular and denote by $g=(g_{1},\ldots,g_{n})$ the isomorphy
class of $\pi:V_{*}\rightarrow B_{*}$ in $H^{1}(B_{*},\mathcal{O}_{B}^{\oplus n})\simeq H^{1}(B_{*},\mathcal{O}_{B})^{\oplus n}$.
Since $B$ is affine, the vanishing of the cohomology groups $H^{i}(B,\mathcal{O}_{B})$,
$i\geq1$, implies that in the long exact excision sequence of cohomology
groups for the pair $(B,b_{0})$ the connecting homomorphism $H^{1}(B_{*},\mathcal{O}_{B})\rightarrow H_{b_{0}}^{2}(B,\mathcal{O}_{B})$
is an isomorphism. If $d\geq3$ then since $B$ is regular, $H_{b_{0}}^{2}(B,\mathcal{O}_{B})=0$
(see e.g. \cite[Theorem 3.8]{Gloc67}) and so, $\pi:V_{*}\rightarrow B_{*}$
is the trivial $\mathbb{G}_{a}^{n}$-torsor. 

It thus remains to consider the case where $B$ is the spectrum of
a noetherian $2$-dimensional regular local ring. Suppose that there
exists a index $i$ such that $g_{i}\neq0$, say $i=1$. Then there
exist a nontrivial $\mathbb{G}_{a}$-torsor $\pi_{1}:W_{1}\rightarrow B_{*}$
with isomorphy class $g_{1}$ and a $\mathbb{G}_{a}^{n-1}$-torsor
$\pi_{2}:W_{2}\rightarrow B_{*}$ with isomorphy class $(g_{2},\ldots,g_{n})$
such that $V_{*}\simeq W_{1}\times_{B_{*}}W_{2}$. Since $\pi_{1}:W_{1}\rightarrow B_{*}$
is nontrivial, the same argument as in the proof of Proposition 1.2
in \cite{DuFin14} shows that $W_{1}$ is an affine scheme. Thus $V_{*}$
is an affine scheme too as the projection $\mathrm{pr}_{1}:V_{*}\rightarrow W_{1}$
is a $\mathbb{G}_{a}^{n-1}$-torsor, hence an affine morphism. But
$V\setminus V_{*}=\pi^{-1}(b_{0})$ has codimension $2$ in $V$,
in contradiction to the fact that the complement of an affine open
subscheme in a locally noetherian scheme has pure codimension $1$.
So $\pi:V_{*}\rightarrow B_{*}$ is a trivial $\mathbb{G}_{a}^{n}$-torsor
as desired. 
\end{proof}
As a consequence of the previous proposition, we obtain the following
characterization:
\begin{cor}
\label{cor:AffLin-in-Fib} Let $\pi:V\rightarrow X$ be an $\mathbb{A}^{n}$-fibration
over a smooth locally noetherian scheme $X$. Suppose that:

1) $\pi$ is a Zariski locally trivial $\mathbb{A}^{n}$-bundle outside
a closed subset $Z\subset X$ of codimension $\geq2$, 

2) $\Omega_{V/B}^{1}=\pi^{*}\mathcal{E}^{\vee}$ for some locally
free sheaf $\mathcal{E}$ of rank $n$ on $X$. 

\noindent Then $\pi:V\rightarrow X$ is a torsor under the vector
bundle $p:E=\mathrm{Spec}(\mathrm{Sym}^{\cdot}\mathcal{E}^{\vee})\rightarrow X$
on $X$. 
\end{cor}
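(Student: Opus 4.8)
The plan is to reduce the statement to the assertion that $\pi:V\rightarrow X$ is a Zariski locally trivial $\mathbb{A}^{n}$-bundle, and then invoke Lemma \ref{lem:AffLin-charac}. Indeed, a Zariski locally trivial $\mathbb{A}^{n}$-bundle is in particular an \'etale locally trivial one, so once Zariski local triviality is known, hypothesis 2) together with Lemma \ref{lem:AffLin-charac} immediately yields that $\pi$ is a torsor under $E=\mathrm{Spec}(\mathrm{Sym}^{\cdot}\mathcal{E}^{\vee})$, which is exactly the conclusion. Thus the whole content is to upgrade the Zariski local triviality over $X\setminus Z$ granted by 1) to Zariski local triviality over all of $X$.

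It suffices to show that every point $x\in X$ admits a Zariski open neighbourhood over which $\pi$ is trivial. I would prove this by induction on $d=\dim\mathcal{O}_{X,x}$. If $x\notin Z$, the existence of such a neighbourhood follows at once from 1); since every irreducible component of $Z$ has codimension $\geq 2$ in $X$, this already disposes of all $x$ with $d\leq 1$. Assume now $x\in Z$; then $\overline{\{x\}}\subseteq Z$ forces $d\geq 2$. As $\pi$ is of finite presentation, the limit arguments of \cite[\S 8]{EGAIV} show that triviality of $\pi$ over $B:=\mathrm{Spec}(\mathcal{O}_{X,x})$ spreads out to triviality over some Zariski open neighbourhood of $x$; so it is enough to prove that $\pi_{B}:V_{B}=V\times_{X}B\rightarrow B$ is a trivial $\mathbb{A}^{n}$-bundle, and I would obtain this from Proposition \ref{prop:MainProp}. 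Its hypotheses are checked directly: $B$ is the spectrum of a noetherian regular --- hence normal --- local ring of dimension $d\geq 2$ because $X$ is smooth over $k$, so a) holds; and $\mathcal{E}^{\vee}|_{B}$ is free, being a locally free sheaf over the local scheme $B$, whence $\Omega^{1}_{V_{B}/B}\simeq\pi_{B}^{*}(\mathcal{E}^{\vee}|_{B})$ is trivial, which is c).

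The remaining hypothesis b) is where the induction enters: one must check that $\pi_{B}$ is a Zariski locally trivial $\mathbb{A}^{n}$-bundle over $B_{*}=B\setminus\{b_{0}\}$, where $b_{0}$ is the closed point. A point $y\in B_{*}$ corresponds to a prime $\mathfrak{p}\subsetneq\mathfrak{m}_{x}$ of $\mathcal{O}_{X,x}$, so $\dim\mathcal{O}_{X,y}=\mathrm{ht}(\mathfrak{p})\leq d-1$ directly from the definition of Krull dimension; by the induction hypothesis $\pi$ is trivial over some Zariski open neighbourhood $U_{y}$ of $y$ in $X$, and the opens $U_{y}\cap B$ form an open cover of $B_{*}$ over which $\pi_{B}$ is trivial. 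Hence $\pi_{B}|_{B_{*}}$ is Zariski locally trivial, Proposition \ref{prop:MainProp} applies, and $\pi_{B}$ is a trivial $\mathbb{A}^{n}$-bundle. This closes the induction, so $\pi$ is Zariski locally trivial over $X$, and the conclusion follows as in the first paragraph.

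The proof is thus essentially a formal assembly of Lemma \ref{lem:AffLin-charac} and Proposition \ref{prop:MainProp}; I expect no serious obstacle, the only point requiring a little care being the organization of the induction on $\dim\mathcal{O}_{X,x}$ --- in particular the elementary but crucial remark that every generization of $x$ distinct from $x$ has strictly smaller local dimension, so that hypothesis b) of Proposition \ref{prop:MainProp} is exactly covered by the inductive hypothesis --- together with the routine spreading-out step, smoothness of $X$ being used precisely to ensure that each local ring $\mathcal{O}_{X,x}$ is of the type to which Proposition \ref{prop:MainProp} applies.
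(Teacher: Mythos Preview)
Your proposal is correct and follows essentially the same route as the paper: reduce to showing that the locus of Zariski local triviality is all of $X$ and then apply Lemma~\ref{lem:AffLin-charac}, proceeding by induction on the codimension $\dim\mathcal{O}_{X,x}$ and invoking Proposition~\ref{prop:MainProp} at each step. Your write-up is in fact slightly more careful than the paper's, which calls the map $\mathrm{Spec}(\mathcal{O}_{X,x})\hookrightarrow X$ an ``open immersion'' and leaves the spreading-out from $B$ to an honest Zariski neighbourhood implicit; your explicit appeal to the limit arguments of \cite[\S 8]{EGAIV} is the right way to fill that in.
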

\begin{proof}
Let $U\subset X$ be the largest open subset over which $\pi:V\rightarrow X$
restricts to a Zariski locally trivial $\mathbb{A}^{n}$-bundle. In
view of Lemma \ref{lem:AffLin-charac}, it suffices to show that $U=X$.
By hypothesis, $U$ contains all codimension $1$ points of $X$.
Now let $x$ be a point of codimension $2$ in $X$ and let $j:B=\mathrm{Spec}(\mathcal{O}_{X,x})\hookrightarrow X$
be the corresponding open immersion. Since $X$ is smooth, $\mathcal{O}_{X,x}$
is a noetherian regular local ring of dimension $2$ and the pull-back
$\mathrm{pr}_{B}:B\times_{X}V\rightarrow B$ is an $\mathbb{A}^{n}$-fibration
restricting to a Zariski locally trivial $\mathbb{A}^{n}$-bundle
over the complement of the closed point of $B$. Furthermore, since
$j^{*}\mathcal{E}^{\vee}\simeq\mathcal{O}_{B}^{\oplus n}$, $\Omega_{B\times_{X}V/B}^{1}\simeq\mathrm{pr}_{B}^{*}j^{*}\mathcal{E}^{\vee}$
is trivial. Thus $\mathrm{pr}_{B}:B\times_{X}V\rightarrow B$ is a
trivial $\mathbb{A}^{n}$-bundle by virtue of Proposition \ref{prop:MainProp},
implying that $U$ contains all codimension $2$ points of $X$. By
descending induction on the codimension of the points of $X$, we
conclude by the same argument that $U$ contains all points of $X$. 
\end{proof}
\begin{cor}
\label{cor:Sathaye-to-DW} Assume that every $\mathbb{A}^{n}$-fibration
over the spectrum of a rank one discrete valuation ring containing
$k$ is a trivial $\mathbb{A}^{n}$-bundle. Then an $\mathbb{A}^{n}$-fibration
$\pi:V\rightarrow X$ over a smooth locally noetherian scheme $X$
is an affine-linear bundle if and only if $\Omega_{V/X}^{1}=\pi^{*}\mathcal{E}^{\vee}$
for some locally free sheaf $\mathcal{E}$ of rank $n$ on $X$.
\end{cor}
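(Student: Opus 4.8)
The plan is to deduce Corollary \ref{cor:Sathaye-to-DW} from Corollary \ref{cor:AffLin-in-Fib} by verifying that, under the standing assumption, an $\mathbb{A}^{n}$-fibration with relative cotangent sheaf induced from the base is automatically a Zariski locally trivial $\mathbb{A}^{n}$-bundle in codimension $\leq 1$. The ``only if'' direction is already recorded in the discussion preceding Lemma \ref{lem:AffLin-charac}: if $\pi$ is an affine-linear bundle, then it is an $E$-torsor for a rank $n$ vector bundle $E$, and the relative cotangent sheaf of any $E$-torsor is $\nu^{*}\mathcal{E}^{\vee}$. So the whole content is the ``if'' direction.

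For the ``if'' direction, suppose $\Omega_{V/X}^{1}\simeq\pi^{*}\mathcal{E}^{\vee}$ with $\mathcal{E}$ locally free of rank $n$. By Corollary \ref{cor:AffLin-in-Fib}, it suffices to produce a closed subset $Z\subset X$ of codimension $\geq 2$ such that $\pi$ restricts to a Zariski locally trivial $\mathbb{A}^{n}$-bundle over $X\setminus Z$. Since being a Zariski locally trivial bundle is local on $X$, and since $\mathcal{E}$ is trivial on a suitable Zariski open cover, I may assume $X$ is the spectrum of a local ring; the question then becomes local at each point $x\in X$. First I would dispose of the codimension $0$ points: over the generic point $\eta$ of a component of $X$, the fiber $\pi^{-1}(\eta)$ is $\mathbb{A}^{n}_{\kappa(\eta)}$, so $\pi$ is generically trivial (generic flatness plus the $\mathbb{A}^{n}$-fibration hypothesis give a trivialization over some dense open). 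Next, for a codimension $1$ point $x$, the local ring $\mathcal{O}_{X,x}$ is a regular local ring of dimension $1$, i.e. a discrete valuation ring, and it contains $k$. The base change $V\times_{X}\mathrm{Spec}(\mathcal{O}_{X,x})\rightarrow\mathrm{Spec}(\mathcal{O}_{X,x})$ is an $\mathbb{A}^{n}$-fibration over the spectrum of a rank one DVR containing $k$, so by the standing assumption it is the trivial $\mathbb{A}^{n}$-bundle. Thus $\pi$ becomes trivial after restriction to $\mathrm{Spec}(\mathcal{O}_{X,x})$ for every codimension $\leq 1$ point $x$, and a standard spreading-out argument (the trivializing isomorphism over the local ring is defined by finitely many elements, hence extends to some open neighborhood of $x$) shows $\pi$ is a Zariski locally trivial $\mathbb{A}^{n}$-bundle over an open set $U$ containing all points of codimension $\leq 1$. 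Setting $Z=X\setminus U$, which is closed of codimension $\geq 2$, Corollary \ref{cor:AffLin-in-Fib} applies and yields that $\pi$ is a torsor under $E=\mathrm{Spec}(\mathrm{Sym}^{\cdot}\mathcal{E}^{\vee})$, in particular an affine-linear bundle.

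I expect the only genuinely delicate point to be the spreading-out step: one must check that smoothness (hence excellence/Jacobson-type finiteness) of $X$ guarantees that $U$ is genuinely open and that its complement has codimension $\geq 2$, rather than merely that $\pi$ is trivial at the local rings of all codimension $\leq 1$ points. This is routine but needs the noetherian hypothesis to phrase the limit argument correctly; the isomorphism $V\times_{X}\mathrm{Spec}(\mathcal{O}_{X,x})\simeq\mathbb{A}^{n}_{\mathcal{O}_{X,x}}$ over the local ring is given by finitely many generators and finitely many relations, all of which are defined over some affine open neighborhood of $x$, after possibly shrinking. Once openness of $U$ is established, the codimension bound is automatic since $U$ contains every codimension $\leq 1$ point by construction. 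Everything else is a direct invocation of the two preceding corollaries and the discussion before Lemma \ref{lem:AffLin-charac}.
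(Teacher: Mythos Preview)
Your proposal is correct and follows essentially the same route as the paper: use smoothness of $X$ to see that $\mathcal{O}_{X,x}$ is a DVR at each codimension $1$ point, invoke the standing assumption to trivialize $\pi$ there, conclude that the locally trivial locus $U$ contains all codimension $\leq 1$ points, and then apply Corollary~\ref{cor:AffLin-in-Fib}. The paper's proof is terser---it does not separately discuss codimension $0$ points or the ``only if'' direction, and it takes openness of $U$ for granted---but your added care on the spreading-out step and the explicit treatment of the converse are sound and do not change the argument's structure.
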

\begin{proof}
Since $X$ is smooth, for every point $x$ of codimension $1$ in
$X$, the local ring $\mathcal{O}_{X,x}$ is a rank one discrete valuation
ring. The hypothesis implies that the restriction of $\pi:V\rightarrow X$
over the image of the open embedding $\mathrm{Spec}(\mathcal{O}_{X,x})\hookrightarrow X$
is a trivial $\mathbb{A}^{n}$-bundle. The largest open subset $U$
of $X$ over which $\pi:V\rightarrow X$ restricts to a Zariski locally
trivial $\mathbb{A}^{n}$-bundle thus contains all points of codimension
$1$ of $X$, and the assertion then follows from Corollary \ref{cor:AffLin-in-Fib}. 
\end{proof}

\section{Applications }

\subsection{Dolgachev-Weisfeiler problem for $\mathbb{A}^{2}$-fibrations }
\begin{thm}
Let $V$ be a scheme on which every rank $2$ vector bundle is trivial.
Then every $\mathbb{A}^{2}$-fibration $\pi:V\rightarrow X$ over
a smooth locally noetherian scheme $X$ is a $\mathbb{G}_{a}^{2}$-torsor. 
\end{thm}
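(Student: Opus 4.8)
The plan is to deduce the statement from Corollary \ref{cor:Sathaye-to-DW} applied with $n=2$, by verifying its two inputs in the present setting: the Sathaye-type triviality over rank one discrete valuation rings containing $k$, and the fact that $\Omega_{V/X}^{1}$ is induced from $X$.

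First I would record that any $\mathbb{A}^{2}$-fibration $\pi:V\rightarrow X$ is smooth of relative dimension $2$: it is flat and of finite presentation, and each scheme-theoretic fiber $\pi^{-1}(x)\simeq\mathbb{A}_{\kappa(x)}^{2}$ is smooth over its residue field, so $\pi$ is smooth by the fiberwise criterion. Consequently $\Omega_{V/X}^{1}$ is a locally free $\mathcal{O}_{V}$-module of constant rank $2$, that is, a rank $2$ vector bundle on $V$. By hypothesis it is trivial, so $\Omega_{V/X}^{1}\simeq\mathcal{O}_{V}^{\oplus 2}=\pi^{*}(\mathcal{O}_{X}^{\oplus 2})$; in particular $\Omega_{V/X}^{1}\simeq\pi^{*}\mathcal{E}^{\vee}$ for the (trivial) locally free sheaf $\mathcal{E}=\mathcal{O}_{X}^{\oplus 2}$ of rank $2$ on $X$.

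Next I would invoke Sathaye's theorem \cite{Sa83}: since $k$ has characteristic zero, every rank one discrete valuation ring containing $k$ contains $\mathbb{Q}$, so every $\mathbb{A}^{2}$-fibration over the spectrum of such a ring is a trivial $\mathbb{A}^{2}$-bundle. Thus the hypothesis of Corollary \ref{cor:Sathaye-to-DW} holds for $n=2$, and applying that corollary with the sheaf $\mathcal{E}$ found above gives that $\pi:V\rightarrow X$ is an affine-linear bundle; unwinding the proof (which passes through Corollary \ref{cor:AffLin-in-Fib} and Lemma \ref{lem:AffLin-charac}), it is a torsor under the vector bundle $E=\mathrm{Spec}(\mathrm{Sym}^{\cdot}\mathcal{E}^{\vee})$. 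Since $\mathcal{E}^{\vee}\simeq\mathcal{O}_{X}^{\oplus 2}$, this vector bundle is $E=\mathrm{Spec}(\mathcal{O}_{X}[t_{1},t_{2}])=\mathbb{A}_{X}^{2}$ with its additive group structure, namely $\mathbb{G}_{a,X}^{2}$, so $\pi:V\rightarrow X$ is a $\mathbb{G}_{a}^{2}$-torsor.

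I do not expect a genuine obstacle, since the substantive work is already carried out in the preceding sections. The only points needing a little care are the verification that $\pi$ is smooth — this is what guarantees that $\Omega_{V/X}^{1}$ is genuinely a rank $2$ vector bundle, so that the triviality hypothesis on $V$ applies to it — and the bookkeeping identifying the vector bundle $E$ of Corollary \ref{cor:AffLin-in-Fib} with $\mathbb{G}_{a,X}^{2}$ once $\mathcal{E}$ is trivial; both are routine.
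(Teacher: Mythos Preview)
Your proposal is correct and follows essentially the same approach as the paper: both use Sathaye's theorem to handle codimension-one points, observe that smoothness of $\pi$ makes $\Omega_{V/X}^{1}$ a rank $2$ vector bundle on $V$ which is then trivial by hypothesis, and feed this into the results of Section~2. The only cosmetic difference is that you invoke Corollary~\ref{cor:Sathaye-to-DW} directly while the paper cites Proposition~\ref{prop:MainProp} and Corollary~\ref{cor:AffLin-in-Fib}; since Corollary~\ref{cor:Sathaye-to-DW} is precisely the packaging of Sathaye's input with Corollary~\ref{cor:AffLin-in-Fib}, the arguments are the same.
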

\begin{proof}
By \cite{Sa83} every $\mathbb{A}^{2}$-fibration over the spectrum
of a discrete valuation ring containing $k$ is a trivial $\mathbb{A}^{2}$-bundle.
On the other hand, since $\pi:V\rightarrow X$ is a smooth morphism
\cite[17.5.1]{EGAIV}, $\Omega_{V/X}^{1}$ is a locally free sheaf
of rank $2$ on $V$, hence is free by hypothesis. The assertion then
follows from Proposition \ref{prop:MainProp} and Corollary \ref{cor:AffLin-in-Fib}. 
\end{proof}
As a consequence of Quillen-Suslin Theorem \cite{Qu76,Su76}, we obtain:
\begin{cor}
\label{thm:Main-Thm1} An $\mathbb{A}^{2}$-fibration $\pi:\mathbb{A}^{m}\rightarrow\mathbb{A}^{n}$
is a trivial $\mathbb{A}^{2}$-bundle. 
\end{cor}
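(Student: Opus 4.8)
The plan is to combine the preceding theorem with the Quillen--Suslin theorem. The final statement concerns an $\mathbb{A}^{2}$-fibration $\pi:\mathbb{A}^{m}\rightarrow\mathbb{A}^{n}$, and I want to conclude it is the trivial $\mathbb{A}^{2}$-bundle. First I would observe that $V=\mathbb{A}^{m}$ and $X=\mathbb{A}^{n}$ are smooth affine schemes of finite type over $k$, so $X$ is in particular a smooth locally noetherian scheme. By the Quillen--Suslin theorem every finitely generated projective module over the polynomial ring $k[t_{1},\ldots,t_{m}]$ is free, i.e.\ every vector bundle on $\mathbb{A}^{m}$ is trivial; in particular every rank $2$ vector bundle on $V=\mathbb{A}^{m}$ is trivial. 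Hence $V$ satisfies the hypothesis of the preceding theorem.

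Next I would invoke that theorem: it gives that $\pi:\mathbb{A}^{m}\rightarrow\mathbb{A}^{n}$ is a $\mathbb{G}_{a}^{2}$-torsor, i.e.\ a torsor under the trivial rank $2$ vector bundle $E=\mathbb{A}^{2}_{X}\rightarrow X$ over $X=\mathbb{A}^{n}$. Finally, since $X=\mathbb{A}^{n}$ is an affine scheme, by the discussion in Section~1 (using that $\check{H}^{1}(X,E)\simeq H^{1}_{\mathrm{Zar}}(X,E)=0$ because $E$ is a vector bundle on an affine scheme and higher cohomology of quasi-coherent sheaves vanishes there) every $E$-torsor over $X$ is isomorphic to the trivial one. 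Therefore $\pi:\mathbb{A}^{m}\rightarrow\mathbb{A}^{n}$ is isomorphic, as a scheme over $\mathbb{A}^{n}$, to $\mathbb{A}^{n}\times\mathbb{A}^{2}$, which is precisely the assertion.

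There is essentially no obstacle here: the corollary is a direct application of the theorem once one feeds in Quillen--Suslin to kill the vector bundle obstruction on the source and the affineness of the base to kill the torsor class. The only point requiring a word of care is that the theorem is applied with roles $V=\mathbb{A}^{m}$ and $X=\mathbb{A}^{n}$, and that ``every rank $2$ vector bundle on $V$ is trivial'' is exactly what Quillen--Suslin provides for $V=\mathbb{A}^{m}_{k}$; one should also note $m\geq n$ automatically (indeed $m=n+2$, by looking at fiber dimensions), though this is not needed for the argument. I would write the proof in two lines: apply the theorem using Quillen--Suslin, then use that $E$-torsors over the affine base $\mathbb{A}^{n}$ are trivial.
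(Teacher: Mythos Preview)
Your proof is correct and follows exactly the paper's approach: apply the preceding theorem (using Quillen--Suslin to verify the hypothesis on rank $2$ vector bundles on $\mathbb{A}^{m}$) to conclude that $\pi$ is a $\mathbb{G}_{a}^{2}$-torsor, then use that such torsors over the affine base $\mathbb{A}^{n}$ are trivial. The paper's proof is simply the terse two-line version you describe at the end.
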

\begin{proof}
By the previous theorem $\pi:\mathbb{A}^{m}\rightarrow\mathbb{A}^{n}$
is a $\mathbb{G}_{a}^{2}$-torsor, hence is trivial since $\mathbb{A}^{n}$
is affine. 
\end{proof}
\begin{rem}
\label{thm:Main-Thm2} Combining the non-existence of nontrivial forms
of the affine plane over fields of characteristic zero \cite{Ka75}
with Lefschtez principle arguments (see e.g. \cite[Lemma 1]{KrR14}),
we get the following characterization, perhaps of more geometric nature: 

L\emph{et $k$ be an an algebraically closed field of infinite transcendence
degree over $\mathbb{Q}$. Then a morphism $\pi:\mathbb{A}_{k}^{m}\rightarrow\mathbb{A}_{k}^{n}$
whose }closed\emph{ fibers are all isomorphic to $\mathbb{A}_{k}^{2}$
is a trivial $\mathbb{A}^{2}$-bundle. }
\end{rem}

\subsection{\label{subsec:Venereau}Variables in a four dimensional polynomial
ring}

Generalizing a construction due to Vénéreau \cite{Ve01}, Daigle-Freudenburg
\cite{DaFr10} and Lewis \cite{Le13} introduced families of ``coordinate-like''
polynomials in four variables obtained as follows: given a polynomial
ring $k[x,y,z,u]$ in four variables over a field $k$ of characteristic
zero and a polynomial $p(x,y,z,u)=yu+\lambda(x,z)$ where $\lambda=z^{2}+r(x)z+s(x)$
for some polynomials $r,s\in k[x]$, we set $v=xz+yp$ and $w=x^{2}u\text{\textminus}2{\displaystyle \frac{\partial\lambda}{\partial z}}p\text{\textminus}yp^{2}$.
A \emph{Vénéreau-type polynomial} is a polynomial of the form $h=y+xQ(x,v,w)\in k[x][v,w]\subset k[x][y,z,u]$. 
\begin{example}
For $p=yu+z^{2}$ and $Q=x^{n-1}v$, one gets the famous \emph{Vénéreau
polynomials} 
\[
v_{n}=y+x^{n}(xz+y(yu+z^{2})),\quad n\geq1.
\]
The choice $p=yu+z^{2}+z$ and $Q=x^{n-1}v$ yields the family of
polynomials $b_{n}=y+x^{n}(xz+y(yu+z^{2}+z))$, $n\geq1$, which appeared
earlier in the work of Bhatwadekar and Dutta \cite[Example 4.13]{BhD94}. 
\end{example}
It is easily checked that $k[x]_{x}[y,z,u]=k[x]_{x}[h,v,w]$ and that
$k[x]/(x)[y,z,u]=k[x]/(x)[h,z,u]$. It was successively proven in
several papers \cite{Ve01,KVZ04,KaZ04,DaFr10,Le13} by clever explicit
computations that some of these Vénéreau-type polynomials $h$ are
$x$-\emph{variables} of $k[x,y,z,u]$, i.e. that there exists polynomials
$h_{2},h_{3}\in k[x][y,z,u]$ such that $k[x][h,h_{2},h_{3}]=k[x][y,z,u]$.
The fact that $v_{n}$, $n\geq3$ is an $x$-variable was established
first by Vénéreau \cite{Ve01}, and generalized for arbitrary $p=yu+\lambda(x,z)$
as above by Daigle-Freudenburg \cite{DaFr10} to all Vénéreau-type
polynomials of the form $y+x^{n}v$, $n\geq3$. The fact that $v_{2}$
is an $x$-variable was established by Lewis \cite{Le13}. In the
same article, he proved more generally that for $p=yu+z^{2}$, all
Vénéreau-type polynomials of the form $h=y+x^{2}Q(x,v,w)+x^{3}vQ_{2}(x,v^{2},w)$
are $x$-variables. 

The cases of the Vénéreau polynomial $v_{1}$ and the Bhatwadekar-Dutta
polynomial $b_{1}$ remained open so far, but since for every Vénéreau-type
polynomial $h$, the morphism $f=(x,h):\mathbb{A}_{k}^{4}\rightarrow\mathbb{A}_{k}^{2}$
is an $\mathbb{A}^{2}$-fibration \cite[Proposition 1.1.]{DaFr10},
Corollary \ref{thm:Main-Thm1} implies the following: 
\begin{prop}
\label{cor:x-variables} Every Vénéreau-type polynomial $h\in k[x,y,z,u]$
is an $x$-variable of $k[x,y,z,u]$. 
\end{prop}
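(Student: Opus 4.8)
The plan is to derive Proposition~\ref{cor:x-variables} directly from Corollary~\ref{thm:Main-Thm1} together with the factorisation structure of V\'en\'ereau-type polynomials already recorded in the text. The starting point is the observation, due to Daigle--Freudenburg \cite[Proposition 1.1.]{DaFr10}, that for every V\'en\'ereau-type polynomial $h$ the morphism $f=(x,h)\colon\mathbb{A}_{k}^{4}\rightarrow\mathbb{A}_{k}^{2}$ is an $\mathbb{A}^{2}$-fibration. Granting this, Corollary~\ref{thm:Main-Thm1} applies verbatim (with $m=4$, $n=2$) and tells us that $f$ is a trivial $\mathbb{A}^{2}$-bundle, i.e.\ there is an isomorphism of $k[x,h]$-algebras $k[x,y,z,u]\simeq k[x,h][s,t]$ for two polynomials $s,t\in k[x,y,z,u]$. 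Writing out what this means, we obtain $k[x][h,s,t]=k[x][y,z,u]$, which is exactly the assertion that $h$ is an $x$-variable of $k[x,y,z,u]$.

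In more detail, I would organise the argument in three short steps. First, recall the defining data: $p=yu+\lambda(x,z)$ with $\lambda=z^{2}+r(x)z+s(x)$, and $v=xz+yp$, $w=x^{2}u-2(\partial\lambda/\partial z)p-yp^{2}$, so that $h=y+xQ(x,v,w)\in k[x][v,w]$. Second, invoke the Daigle--Freudenburg result that $f=(x,h)$ is an $\mathbb{A}^{2}$-fibration; concretely this rests on the two identities already displayed in the excerpt, namely $k[x]_{x}[y,z,u]=k[x]_{x}[h,v,w]$ (so the generic fibre of $f$ is $\mathbb{A}^{2}$ over $k(x)$, indeed over $k[x]_x$) and $k[x]/(x)[y,z,u]=k[x]/(x)[h,z,u]$ (so the special fibre over $x=0$ is $\mathbb{A}^{2}$ as well), together with flatness and finite presentation of $f$, which are automatic since source and target are polynomial rings and $f$ is dominant of relative dimension $2$ with all fibres of the expected dimension. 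Third, apply Corollary~\ref{thm:Main-Thm1} to conclude that $f$ is a trivial $\mathbb{A}^{2}$-bundle over $\mathbb{A}^{2}_{k}=\mathrm{Spec}\,k[x,h]$, and read off the coordinate system $k[x][h,h_{2},h_{3}]=k[x][y,z,u]$.

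The only genuine content beyond citing Corollary~\ref{thm:Main-Thm1} is verifying that $f=(x,h)$ really is an $\mathbb{A}^{2}$-fibration, and I would simply attribute this to \cite[Proposition 1.1.]{DaFr10} rather than reprove it; the remaining passages (flatness, finite presentation, the translation between ``trivial $\mathbb{A}^{2}$-bundle over $\mathbb{A}^{2}$'' and ``$x$-variable'') are formal. I expect no real obstacle here: this proposition is an application, and the heavy lifting has already been done in the proof of the main theorem and its corollary. The one point to be careful about is that Corollary~\ref{thm:Main-Thm1} gives a \emph{scheme} isomorphism $\mathbb{A}^{4}\simeq\mathbb{A}^{2}\times\mathbb{A}^{2}$ over $\mathbb{A}^{2}$, which we must then upgrade to a statement about the \emph{polynomial ring} $k[x,y,z,u]$ as an algebra over $k[x]$; since all the schemes in sight are affine and the isomorphism is over $\mathrm{Spec}\,k[x,h]$, this is immediate by taking global sections, and the resulting presentation $k[x][y,z,u]=k[x][h,h_{2},h_{3}]$ is precisely the definition of $h$ being an $x$-variable.

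\begin{proof}
By \cite[Proposition 1.1.]{DaFr10}, for every V\'en\'ereau-type polynomial $h\in k[x,y,z,u]$ the morphism $f=(x,h)\colon\mathbb{A}_{k}^{4}\rightarrow\mathbb{A}_{k}^{2}$ is an $\mathbb{A}^{2}$-fibration. By Corollary \ref{thm:Main-Thm1}, $f$ is therefore a trivial $\mathbb{A}^{2}$-bundle, that is, there exists an isomorphism of $k[x,h]$-algebras $k[x,y,z,u]\simeq k[x,h][s,t]$ for suitable $s,t\in k[x,y,z,u]$. Taking $h_{2}=s$ and $h_{3}=t$, this isomorphism reads $k[x][h,h_{2},h_{3}]=k[x][y,z,u]$, which is precisely the assertion that $h$ is an $x$-variable of $k[x,y,z,u]$.
\end{proof}
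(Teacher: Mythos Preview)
Your proof is correct and follows exactly the same route as the paper: cite \cite[Proposition 1.1.]{DaFr10} to see that $f=(x,h)$ is an $\mathbb{A}^{2}$-fibration, apply Corollary~\ref{thm:Main-Thm1} to conclude it is a trivial $\mathbb{A}^{2}$-bundle, and then read off the coordinate system. The paper in fact gives no separate proof for the proposition, the argument being absorbed into the sentence immediately preceding its statement.
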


\subsection{\label{subsec:Freudenburg}Locally nilpotent derivations with a slice}

Recall that given a ring $R$, an $R$-derivation $D$ of the polynomial
ring $R[x,y,z]$ in three variables over $R$ is called locally nilpotent
if $R[x,y,z]=\bigcup_{n\geq0}\mathrm{Ker}D^{n}$ . A slice for $D$
is an element $s\in R[x,y,z]$ such that $Ds=1$. The following proposition
answers a question of Freudenburg \cite{Fr09} concerning kernels
of locally nilpotent $R$-derivations of $R[x,y,z]$ with a slice.
\begin{prop}
Let $R$ be a regular ring essentially of finite type over a field
$k$ of characteristic zero. Then the kernel of a locally nilpotent
$R$-derivation of $R[x,y,z]$ with a slice is isomorphic to the symmetric
algebra $\mathrm{Sym}_{R}^{\cdot}M$ of a $1$-stably free projective
$R$-module of rank $2$.
\end{prop}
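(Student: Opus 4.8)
The plan is to translate the statement about locally nilpotent derivations into the geometric language of the earlier sections, and then invoke the main characterization results. Let $D$ be a locally nilpotent $R$-derivation of $A=R[x,y,z]$ with a slice $s$, so $Ds=1$. Then $D$ generates an action of $\mathbb{G}_a$ on $V=\mathrm{Spec}(A)=\mathbb{A}^3_R$, and the existence of a slice means this action is free with a trivialization: indeed, the subring $A^D=\mathrm{Ker}\,D$ satisfies $A=A^D[s]\cong A^D{}^{[1]}$, so $\pi:V\to Y:=\mathrm{Spec}(A^D)$ is a trivial $\mathbb{A}^1$-bundle over $Y$. The composite $\rho:Y\to\mathrm{Spec}(R)$ is then the morphism whose structure we must understand; concretely $Y\cong\mathbb{A}^3_R\sslash\mathbb{G}_a$. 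The goal is to show $A^D\cong\mathrm{Sym}_R^{\cdot}M$ for a rank $2$ projective $R$-module $M$ that becomes free after adding one copy of $R$ (i.e. $M\oplus R\cong R^3$).

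First I would check that $\rho:Y\to\mathrm{Spec}(R)$ is an $\mathbb{A}^2$-fibration. Flatness and finite presentation: since $A=A^D[s]$ is $R$-flat (it is $R[x,y,z]$) and free as an $A^D$-module, $A^D$ is a direct summand, hence $R$-flat; finite presentation over the Noetherian-type ring $R$ is automatic. For the fibers: over a point $\mathfrak{p}\in\mathrm{Spec}(R)$ with residue field $\kappa$, $D$ induces a locally nilpotent $\kappa$-derivation of $\kappa[x,y,z]$ with slice the image of $s$, and by the classical slice theorem in characteristic zero its kernel is a polynomial ring $\kappa^{[2]}$; that kernel is precisely the fiber ring of $A^D$ since $A^D\otimes_R\kappa=(A\otimes_R\kappa)^{D\otimes\kappa}$ (the slice makes formation of invariants commute with base change, as $A^D$ is the kernel of the $R$-linear map $D:A\to A$ which, split by multiplication, stays a kernel after tensoring). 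So every scheme-theoretic fiber of $\rho$ is $\mathbb{A}^2_{\kappa(\mathfrak p)}$, and $\rho$ is an $\mathbb{A}^2$-fibration over the smooth (regular, essentially of finite type over $k$) scheme $X=\mathrm{Spec}(R)$.

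Next I would verify the cotangent hypothesis of Corollary \ref{cor:Sathaye-to-DW} (equivalently the smooth-morphism version used in Section 3): since $A=A^D[s]$, we have $\Omega^1_{V/X}\cong\Omega^1_{V/Y}\oplus\pi^*\Omega^1_{Y/X}$, and $\Omega^1_{V/Y}$ is free of rank $1$ (generated by $ds$) because $\pi$ is a trivial $\mathbb{A}^1$-bundle. On the other hand $\Omega^1_{V/X}=\Omega^1_{A/R}$ is free of rank $3$ (generated by $dx,dy,dz$). Hence $\pi^*\Omega^1_{Y/X}$ is stably free of rank $2$ on $V$; pulling back along the zero section $Y\hookrightarrow V$ of $\pi$ shows $\Omega^1_{Y/X}$ itself is stably free of rank $2$ on $Y$, and the relation $\Omega^1_{Y/X}\oplus\mathcal{O}_Y\cong\mathcal{O}_Y^{\oplus 3}$ is exactly $1$-stable freeness. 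In particular $\Omega^1_{Y/X}$ is locally free of rank $2$ on $Y$, but we need it \emph{induced from} $X$: here one uses that $\rho$ is an $\mathbb{A}^2$-fibration which, by Sathaye's theorem \cite{Sa83} and Corollary \ref{cor:AffLin-in-Fib}, is already known over each discrete valuation ring $\mathcal{O}_{X,x}$ (codimension one points of the regular scheme $X$) to be a trivial bundle, hence an affine-linear bundle there, so $\Omega^1_{Y/X}$ restricted over $\mathrm{Spec}\,\mathcal{O}_{X,x}$ is free — and then one feeds the $\mathbb{A}^2$-fibration $\rho$ into Corollary \ref{cor:Sathaye-to-DW}. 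Wait: Corollary \ref{cor:Sathaye-to-DW} requires \emph{a priori} that $\Omega^1_{Y/X}=\rho^*\mathcal{E}^\vee$ for some rank $2$ locally free $\mathcal{E}$ on $X$, which is what we are trying to establish. The correct route is the following.

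The clean argument: by Sathaye's theorem, over every rank one DVR localization $\mathcal{O}_{X,x}$ the fibration $\rho$ is a trivial $\mathbb{A}^2$-bundle, so the largest open $U\subseteq X$ over which $\rho$ is Zariski-locally-trivial contains all codimension $\leq 1$ points. Over $U$, $\rho$ is an affine-linear bundle by Lemma \ref{lem:AffLin-charac} applied with $\mathcal{E}_U$ the rank $2$ locally free sheaf with $\Omega^1_{\rho^{-1}(U)/U}\cong\mathrm{pr}^*\mathcal{E}_U^\vee$ (such $\mathcal{E}_U$ exists because over the Zariski trivializing cover the transition maps can be chosen affine after the Bass-Connell-Wright argument — more simply, an affine-linear bundle is exactly what a Zariski-locally-trivial $\mathbb{A}^2$-bundle with induced relative cotangent sheaf is, and triviality over codimension one points plus the structure of $\mathbb{A}^2$-fibration forces the cotangent sheaf to descend; here I would instead argue directly that since $\Omega^1_{Y/X}$ is a rank $2$ locally free sheaf on $Y$ that is free over each fiber of $\rho$ and free over $\rho^{-1}(\mathrm{Spec}\,\mathcal{O}_{X,x})$, its formation is compatible with the $\mathbb{G}_a^2$-action that $\rho$ acquires over $U$, so it is pulled back from $U$). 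Then $\mathcal{E}_U$ extends to a rank $2$ reflexive sheaf $\mathcal{E}$ on the smooth $X$ with $\mathcal{E}|_U=\mathcal{E}_U$; since $X$ is regular and $\mathrm{codim}(X\setminus U)\geq 2$, I need $\mathcal{E}$ to be locally free — this is where a regularity/depth argument is used: $\mathcal{E}\oplus\mathcal{O}_X\cong\mathcal{O}_X^{\oplus 3}$ off $U$ and reflexive, hence by excision and the vanishing of $H^i_{X\setminus U}(X,\mathcal{O}_X)$ for $i\leq 1$ (regular, codim $\geq 2$) one patches to get the same relation globally, so $\mathcal{E}$ is stably free, in particular locally free of rank $2$, and $1$-stably free. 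With $\Omega^1_{V/X}=\pi^*\mathcal{E}^\vee$ now established on all of $X$, Corollary \ref{cor:AffLin-in-Fib} gives that $\rho:Y\to X$ is a torsor under $E=\mathrm{Spec}(\mathrm{Sym}^{\cdot}_R\mathcal{E}^\vee)$; since $X=\mathrm{Spec}(R)$ is affine, every $E$-torsor is trivial, so $Y\cong E$ as $X$-schemes, i.e. $A^D\cong\mathrm{Sym}_R^{\cdot}\mathcal{E}^\vee\cong\mathrm{Sym}_R^{\cdot}M$ with $M=\mathcal{E}^\vee$. Finally $M$ is projective of rank $2$ with $M\oplus R\cong R^3$ (equivalently $1$-stably free), which is the claim.

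The main obstacle is the descent/extension step: showing that the rank $2$ locally free sheaf controlling $\Omega^1_{Y/X}$ over the good locus $U$ (complement of a codimension $\geq 2$ set in $X$) is the restriction of a \emph{locally free} sheaf $\mathcal{E}$ on all of $X$, so that Corollary \ref{cor:AffLin-in-Fib} becomes applicable. The key inputs making this work are the regularity of $X$, the codimension $\geq 2$ estimate coming from Sathaye's theorem over DVRs, the local cohomology vanishing $H^i_Z(X,\mathcal{O}_X)=0$ for $i\leq 1$ and $Z=X\setminus U$ of codimension $\geq 2$ in the regular $X$ (exactly as in Proposition \ref{prop:MainProp}), and the free resolution $\Omega^1_{A/R}=A^{\oplus 3}=\Omega^1_{A/A^D}\oplus(A^D{}^{\oplus ?}\otimes\ldots)$ splitting off the free rank $1$ piece $A\,ds$, which pins down the stable class of $M$; once stable freeness of $\mathcal{E}$ is known, local freeness and hence applicability of Corollary \ref{cor:AffLin-in-Fib} are immediate, and the rest is the torsor-triviality over the affine base.
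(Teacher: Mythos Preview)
Your overall strategy differs from the paper's and can in principle be completed, but the crucial descent step is left with a real gap. The paper does not attempt any codimension-$2$ extension argument: it shows directly that $\Omega^1_{B/R}$ is induced from $R$ via Lindel's theorem \cite{Li81}. Since $B[s]=R[x,y,z]$, the functor $N\mapsto N\otimes_B R[x,y,z]$ on finitely generated projective $B$-modules is injective (split by $s\mapsto 0$), while by Lindel --- this is exactly where ``regular, essentially of finite type over a field'' is used --- the functor $M\mapsto M\otimes_R R[x,y,z]$ on projective $R$-modules is bijective. Hence $\Omega^1_{B/R}\cong B\otimes_R M$ for some projective $R$-module $M$ of rank $2$, and Corollary~\ref{cor:Sathaye-to-DW} applies immediately; $1$-stable freeness of $M$ then drops out of the split cotangent sequence for $R\subset B\subset B[s]$, which is the same sequence you wrote.

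Your route --- Sathaye over codimension-$1$ points gives a locally trivial locus $U$ with $\mathrm{codim}(X\setminus U)\geq 2$, Bass--Connell--Wright produces a rank-$2$ bundle $\mathcal{E}_U^{\vee}$ on $U$ with $\Omega^1_{\rho^{-1}(U)/U}\cong\rho^*\mathcal{E}_U^{\vee}$, then push forward to $X$ --- can be made to work, but you stop one step short. After arguing that $j_*\mathcal{E}_U^{\vee}$ is locally free on $X$ (fine: $j_*$ preserves the relation $\mathcal{E}_U^{\vee}\oplus\mathcal{O}_U\cong\mathcal{O}_U^{\oplus 3}$ since $j_*\mathcal{O}_U=\mathcal{O}_X$), you still need $\Omega^1_{Y/X}\cong\rho^*(j_*\mathcal{E}_U^{\vee})$ on \emph{all} of $Y$, not only over $\rho^{-1}(U)$. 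You simply assert this (``now established on all of $X$''). It is true --- both sides are locally free on the regular scheme $Y$ and agree outside a closed subset of codimension $\geq 2$, so the isomorphism extends by the $S_2$ property --- but that is precisely the missing sentence, and without it Corollary~\ref{cor:AffLin-in-Fib} is not applicable. Two smaller points: your appeal to Lemma~\ref{lem:AffLin-charac} to \emph{produce} $\mathcal{E}_U$ is backwards (that lemma goes from an induced cotangent sheaf to a torsor structure; cite \cite{BCW77} instead to get the vector-bundle structure over $U$), and ``$\mathcal{E}\oplus\mathcal{O}_X\cong\mathcal{O}_X^{\oplus 3}$ off $U$'' should read ``on $U$''.
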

\begin{proof}
The kernel $B\subset R[x,y,z]$ of such a derivation $D$ is an $R$-algebra
of finite type such that $B[s]=R[x,y,z]$ for a slice $s\in R[x,y,z]$
of $D$. By \cite[Theorem 1.1]{Fr09}, the inclusion $R\subset B$
defines an $\mathbb{A}^{2}$-fibration $f:\mathrm{Spec}(B)\rightarrow\mathrm{Spec}(R)$.
Since $R[x,y,z]=B[s]$, $B$ is a smooth $R$-algebra and so, $\Omega_{B/R}^{1}$
is a projective $B$-module of rank $2$. The map which associates
to a finitely generated projective $B$-module $N$ the projective
$R[x,y,z]$-module $N\otimes_{B}R[x,y,z]$ is injective. On the other
hand, since $R$ is regular and essentially of finite type, it follows
from Lindell \cite{Li81} that the map which associates to a finitely
generated projective $R$-module $M$ the projective $R[x,y,z]$-module
$M\otimes_{B}R[x,y,z]$ is bijective. Consequently, there exists a
projective $R$-module $M$ of rank $2$ such that $\Omega_{B/R}^{1}\simeq B\otimes_{R}M$.
By Corollary \ref{cor:Sathaye-to-DW}, $B$ is isomorphic to $\mathrm{Sym}_{R}^{\cdot}M$.
The fact that $M$ is stably free follows from the canonical split
exact sequence 
\[
0\rightarrow\Omega_{B/R}^{1}\otimes_{B}R[x,y,z]\rightarrow\Omega_{R[x,y,z]/R}^{1}\simeq R[x,y,z]^{\oplus3}\rightarrow\Omega_{R[x,y,z]/B}^{1}\simeq R[x,y,z]\rightarrow0
\]
which implies that $\Omega_{B/R}^{1}\otimes_{B}R[x,y,z]\simeq M\otimes_{R}R[x,y,z]$
is $1$-stably free, whence that $M$ is $1$-stably free. 
\end{proof}
\begin{rem}
Note that conversely, given a $1$-stably free projective module $M$
of rank $2$ on a ring $R$, the symmetric algebra $\mathrm{Sym}_{R}^{\cdot}(M\oplus R)$
is $R$-isomophic to $R[x,y,z]$ and can be equipped via the isomorphism
$\mathrm{Sym}_{R}^{\cdot}(M\oplus R)\simeq\mathrm{Sym}_{R}^{\cdot}(M)\otimes_{R}R[s]$
with the locally nilpotent $R$-derivation ${\displaystyle D=\frac{\partial}{\partial s}}$
whose kernel is isomorphic to $\mathrm{Sym}_{R}^{\cdot}(M)$ and which
has $s$ as a slice.
\end{rem}
As a consequence of Quillen-Suslin Theorem \cite{Qu76,Su76}, we obtain
the following solution to Question 1 in \cite{Fr09}:
\begin{cor}
Let $R=k[x_{1},\ldots,x_{n}]$ be a polynomial ring in $n\geq2$ variables.
Then the kernel of a locally nilpotent $R$-derivation of $R[x,y,z]$
with a slice is isomorphic to a polynomial ring in $2$ variables
over $R$. In other words, such a derivation is conjugate to $\partial/\partial x$
by an $R$-automorphism of $R[x,y,z]$. 
\end{cor}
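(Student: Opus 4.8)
The plan is to specialize the preceding Proposition to $R=k[x_{1},\ldots,x_{n}]$ and then invoke the Quillen--Suslin Theorem to remove the twisting. First I would note that $R=k[x_{1},\ldots,x_{n}]$ is a regular ring of finite type over $k$, so the hypotheses of the previous Proposition are met. Hence the kernel $B\subset R[x,y,z]$ of a locally nilpotent $R$-derivation $D$ with a slice is $R$-isomorphic to the symmetric algebra $\mathrm{Sym}_{R}^{\cdot}M$ of a $1$-stably free projective $R$-module $M$ of rank $2$. Then I would apply the Quillen--Suslin Theorem \cite{Qu76,Su76}: every finitely generated projective module over the polynomial ring $R=k[x_{1},\ldots,x_{n}]$ is free, so in particular $M\simeq R^{\oplus 2}$, whence $B\simeq\mathrm{Sym}_{R}^{\cdot}M\simeq R[T_{1},T_{2}]$ is a polynomial ring in two variables over $R$. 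This already gives the first assertion.

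For the reformulation as a conjugacy statement, I would recall from the proof of the previous Proposition that a slice $s\in R[x,y,z]$ of $D$ satisfies $B[s]=R[x,y,z]$. Since $B$ has transcendence degree $2$ over $R$ while $R[x,y,z]$ has transcendence degree $3$, the element $s$ is transcendental over $B$, so $R[x,y,z]=B[s]$ is a polynomial ring in the single variable $s$ over $B$, and the $R$-derivation $D$, which vanishes on $B$ and sends $s$ to $1$, is precisely $\partial/\partial s$ relative to this presentation. Composing the isomorphism $B\simeq R[T_{1},T_{2}]$ with the induced identification $B[s]\simeq R[T_{1},T_{2}][s]$ and with the relabelling $(T_{1},T_{2},s)\mapsto (y,z,x)$ of the three variables then produces an $R$-automorphism $\psi$ of $R[x,y,z]$ satisfying $\psi\circ D=(\partial/\partial x)\circ\psi$, i.e. $D=\psi^{-1}\circ(\partial/\partial x)\circ\psi$.

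I do not expect any genuine obstacle here: all the substantive work is carried out in the preceding Proposition (which itself rests on Corollary \ref{cor:Sathaye-to-DW} and Lindell's theorem), and the only additional input is the Quillen--Suslin Theorem, used exactly to upgrade the \emph{$1$-stably free} module $M$ to a \emph{free} one.
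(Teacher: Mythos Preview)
Your proposal is correct and follows essentially the same route as the paper: specialize the preceding Proposition to $R=k[x_1,\ldots,x_n]$ and invoke Quillen--Suslin to force the $1$-stably free rank-$2$ module $M$ to be free, hence $B\simeq R[T_1,T_2]$. The paper leaves the conjugacy reformulation implicit, whereas you spell it out via the slice identity $R[x,y,z]=B[s]$; this elaboration is accurate and is the standard slice-theorem argument.
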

\bibliographystyle{amsplain}

\end{document}